\newcommand*{\rom}[1]{\expandafter\@slowromancap\romannumeral #1@}
      \theoremstyle{plain}
      \newtheorem{theorem}{Theorem}[section]
      \newtheorem{lemma}[theorem]{Lemma}
      \newtheorem{corollary}[theorem]{Corollary}
      \theoremstyle{definition}
      \newtheorem{definition}[theorem]{Definition}
      \theoremstyle{remark}
\newcommand{\wc}{\mathfrak a^+}
      \def\@setcopyright{}
      \def\serieslogo@{}
\begin{document}
    \author{Sungwoon Kim}
   \address{Department of Mathematics, Jeju National University, 102 Jejudaehak-ro, Jeju, 63243, Republic of Korea}
   \email{sungwoon@jejunu.ac.kr}

   \title[]{Limit sets for convex cocompact groups in higher rank symmetric spaces}

\begin{abstract}
We show that every limit point of a discrete subgroup $\Gamma$ of the isometry group of a symmetric space of noncompact type is conical if and only if $\Gamma$ is convex cocompact.
\end{abstract}

\footnotetext[1]{2000 {\sl{Mathematics Subject Classification: 53C35, 22E40}}
}

\footnotetext[2]{{\sl{Key words and phrases. conical limit point, convex cocompact group, symmetric space }}
}

   \keywords{}

   \thanks{}
   \thanks{}

   \dedicatory{}

   \date{}

   \maketitle

\section{Introduction}
\label{}

Let $X$ be a symmetric space of noncompact type and $G$ be the identity component of the isometry group of $X$.
Let $\Gamma$ be a discrete subgroup of $G$. Choose a base point $o \in X$. Then the limit set $L_\Gamma$ of $\Gamma$ is defined by $$L_\Gamma= \overline{\Gamma \cdot o} \cap \partial_\infty X,$$ where $\partial_\infty X$ is the geometric boundary of $X$ with the cone topology.
In order to describe how the orbits of $\Gamma$ approach the limit points of $\Gamma$, 
there have been many studies of the types of limit points, such as conical limit points, radial limit points, and horospherical limit points (see \cite{Al,Li04,Li06-2}).
We concern ourselves with conical limit points, a type which has played an important role in studying convex cocompact groups in rank one symmetric spaces.

\begin{definition}
A point $\xi \in \partial_\infty X$ is called a \emph{conical limit point} of $\Gamma$ if for some (and hence every) geodesic ray $\phi_\xi$ tending to $\xi$, there exists a sequence $\{\gamma_n\} \subset \Gamma$ such that $d(\gamma_n x, \phi_\xi)$ is uniformly bounded.
\end{definition}

A group $\Gamma$ is said to be \emph{convex cocompact} if there is a $\Gamma$-invariant convex subset $C\subset X$ with compact quotient $C/\Gamma$. There are many important examples of convex cocompact groups in rank one symmetric spaces, such as quasi-Fuchsian surface groups and Schottky groups.
For higher rank symmetric spaces, it turns out that the notion of convex cocompact group does not give new examples beyond the rank one convex cocompact groups. More precisely, Kleiner--Leeb \cite{KL} and Quint \cite{Qui} proved that the only way to produce convex cocompact groups in higher rank symmetric spaces is to take products of uniform lattices and rank one convex cocompact groups.

For Hadamard spaces of pinched negative curvature, Bowditch \cite{Bo95} gave many equivalent definitions of a convex cocompact group. One of them is as follows.
When $X$ has negative sectional curvature, then $\Gamma$ is convex cocompact if and only if every limit point of $\Gamma$ is conical.
In higher rank symmetric spaces, it is still true that if $\Gamma$ is convex cocompact, then every limit point of $\Gamma$ is conical.
It is therefore of interest to explore whether the converse holds in higher rank symmetric spaces. 
The aim of this paper is to answer the question in the affirmative. Namely, the main theorem is the following.

\begin{theorem}\label{main}
Let $X$ be a symmetric space of noncompact type and let $\Gamma$ be a Zariski dense discrete subgroup of the isometry group of $X$. Then $\Gamma$ is convex cocompact if and only if  every limit point of $\Gamma$ is conical.
\end{theorem}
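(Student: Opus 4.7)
The forward direction is classical (and alluded to in the introduction), so my plan concentrates on the converse: assume every limit point of $\Gamma$ is conical, and deduce convex cocompactness. My first move is to separate the rank-one case, where the statement is Bowditch's theorem, from the higher-rank case; Zariski density ensures that in a reducible situation $\Gamma$ cannot project trivially onto any factor, so the argument factors cleanly across de~Rham decomposition.

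For the higher-rank irreducible case, the key external input I intend to use is the Kleiner--Leeb and Quint classification: a Zariski dense convex cocompact subgroup of $G$ must itself be a uniform lattice. Thus the converse reduces to the following reformulation: under the hypotheses that $\Gamma$ is Zariski dense and every limit point is conical, one has $L_\Gamma = \partial_\infty X$ and $\Gamma\backslash X$ is compact.

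The substantive steps are the following. (i) Invoke Benoist's limit cone theorem to see that the limit cone $\mathcal{L}_\Gamma \subset \wc$ has nonempty interior, so regular limit points exist and are dense in $L_\Gamma$. (ii) Analyse conicality at singular boundary points: a singular $\xi \in L_\Gamma$ lies on a parallel set of positive Euclidean rank, and conicality should provide a sequence $\gamma_n \cdot o$ whose Cartan projections $\mu(\gamma_n)$ approach the $\wc$-direction of $\xi$ with uniformly bounded transverse error. (iii) Combine (ii) with Zariski density and density of the $\mu(\Gamma)$-directions in $\mathcal{L}_\Gamma$ to force $\mathcal{L}_\Gamma = \wc$ and $L_\Gamma = \partial_\infty X$. (iv) Upgrade density of $L_\Gamma$ to cocompactness of the $\Gamma$-action on $X$, using uniform conicality on compact subsets of $L_\Gamma$ together with the compactness of the Furstenberg boundary $\flagv$.

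The main obstacle I foresee is step (ii): translating the purely geometric conicality condition (bounded distance from an orbit to a geodesic ray in $X$) into algebraic/dynamical data about Cartan projections, especially when $\xi$ is singular. Parallel sets at singular directions carry nontrivial Euclidean de~Rham factors, so an orbit can drift along those factors while remaining within a tubular neighborhood of a ray, and I must rule out this drift from disrupting the intended conclusion. A secondary subtlety is the matching between the cone topology on $\partial_\infty X$ and the topology on $\flagv$, since these encode different amounts of information about a limit direction, and control over one does not immediately yield control over the other. Once (iii) is in hand, the passage from $L_\Gamma = \partial_\infty X$ to cocompactness via uniform conicality and compactness of $\flagv$ should be a relatively standard closed-orbit/compactness argument.
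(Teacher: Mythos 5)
Your overall frame (reduce via the Kleiner--Leeb/Quint classification to showing that, in the irreducible higher-rank case, $L_\Gamma=\partial_\infty X$ and $\Gamma$ is a uniform lattice) is consistent with what the theorem must yield, but the proposal omits the one idea that actually carries the proof: a mechanism by which conicality of every limit point forces $L_\Gamma$ to swallow entire apartments. You defer exactly this to steps (ii)--(iii) and flag it yourself as the main obstacle without resolving it; as written, nothing in the proposal gets you from ``the limit cone has nonempty interior'' (Benoist) to $P_\Gamma=\overline{\wc_1}$, and concentrating on singular limit points is the wrong place to look, since a priori you do not even know that any singular direction occurs in $P_\Gamma$, let alone that conicality there constrains the Cartan projections in the way you hope.

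The paper's mechanism is concrete and elementary: pick a regular axial $\gamma_0\in\Gamma$ whose translation direction is interior to $P_\Gamma$, take any $2$-plane $\mathbb E^2$ in the maximal flat $\mathrm{Ax}(\gamma_0)$ whose boundary circle $S^1$ passes through $\gamma_0^{\pm}$, and choose limit points $\xi,\eta\in S^1\cap L_\Gamma$ on either side of $\gamma_0^+$. Conicality at $\xi$ and $\eta$ produces orbit points whose projections to $\mathbb E^2$ have $x_2$-coordinate tending to $\pm\infty$; translating these by $\langle\gamma_0\rangle$ makes them $l(\gamma_0)$-dense on horizontal lines escaping to infinity, so every direction of $S^1$ is approximated by orbit points and $S^1\subset L_\Gamma$. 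Hence $\partial_\infty F(\gamma_0^+,\gamma_0^-)\subset L_\Gamma$, whence $P_\Gamma=\overline{\wc_1}$; then $\Gamma$-duality, Link's approximation of an arbitrary antipodal pair of limit points by axes of regular axial isometries, and closedness of $L_\Gamma$ extend this to every antipodal pair, so $L_\Gamma$ is a top-dimensional subbuilding of $\partial_TX$. Only at that point do Theorem 3.1 of \cite{KL} (rigidity of invariant convex sets, which also handles the reducible case via the join decomposition, replacing your unproved claim that conicality ``factors cleanly'' over the de~Rham factors) and Bowditch's nearest-point/law-of-cosines argument produce the convex set $C$ with $\partial_\infty C=L_\Gamma$ and the cocompactness of the $\Gamma$-action. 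Your step (iv) is roughly this last argument, but without the subbuilding step the proposal does not constitute a proof.
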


Link \cite{Li04} defined the notion of radially cocompact group (see Definition \ref{rc}) in higher rank symmetric spaces to replace the notion of rank one convex cocompact group and then showed that if $\Gamma$ is radially cocompact and Zariski dense, the Hausdorff dimension of the radial limit set in a given subset $\Gamma\cdot \xi$ equals the exponent of growth in the direction $\Gamma\cdot \xi$. The question we concern ourselves with is whether the notion of radially cocompact group gives a new class of discrete groups in higher rank symmetric spaces.

Examples of radially cocompact groups include convex cocompact groups in rank one symmetric spaces of noncompact type, uniform lattices acting on symmetric spaces, and products of rank one convex cocompact groups. As will be seen in Corollary \ref{cor}, these are essentially the only examples to produce radially cocompact groups.

\begin{corollary}\label{cor}
Let $X$ be a symmetric space of noncompact type and let $\Gamma$ be a Zariski dense discrete subgroup of the isometry group of $X$. If $\Gamma$ is radially cocompact, then $\Gamma$ is convex cocompact.
\end{corollary}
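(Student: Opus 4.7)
The plan is to deduce Corollary \ref{cor} directly from Theorem \ref{main}. Since the main theorem characterizes convex cocompactness of a Zariski dense discrete subgroup as the property that every limit point is conical, it suffices to show that when $\Gamma$ is radially cocompact and Zariski dense, every point of $L_\Gamma$ is conical. Writing $L_\Gamma^{\mathrm{rad}}$ for the set of radial limit points, and noting that every radial limit point is automatically conical in the sense of the definition in this excerpt, the task reduces to proving $L_\Gamma^{\mathrm{rad}} = L_\Gamma$.

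First I would unpack Definition \ref{rc} and record that radial cocompactness supplies a $\Gamma$-invariant subset $C \subset X$, built from geodesic rays based at $o$ ending in the radial limit set, on which $\Gamma$ acts cocompactly. From this I would read off that $L_\Gamma^{\mathrm{rad}}$ is closed and $\Gamma$-invariant in $\partial_\infty X$, and that the visual boundary of $C$ is precisely $L_\Gamma^{\mathrm{rad}}$. This step is essentially bookkeeping once the definition is in hand.

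Next I would combine this with Zariski density. Benoist's results on Zariski dense subgroups of semisimple Lie groups yield that the limit cone of $\Gamma$ has nonempty interior in $\wc$, that regular limit points are dense in $L_\Gamma$, and that the $\Gamma$-action on the flag manifold $\flagv$ is minimal on the limit set. These facts imply that any hypothetical non-conical $\xi \in L_\Gamma$ can be approximated by a sequence $\xi_n \in L_\Gamma^{\mathrm{rad}}$. The rays from $o$ to $\xi_n$ lie in $C$ and are shadowed by orbits $\gamma_{n,k} o$ at bounded distance by radial cocompactness. The goal is then to diagonalize: extract from the family $\{\gamma_{n,k}\}$ a sequence whose orbit stays at uniformly bounded distance from a chosen ray $\phi_\xi$, which would make $\xi$ itself radial and produce the desired contradiction.

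The main obstacle will be precisely this last diagonalization. In higher rank, non-conical limit points may lie on singular walls of Weyl chambers, and geodesic rays approaching a singular boundary point are not uniquely determined by their asymptotic Weyl sector, so passing from shadowing along approximating regular rays to shadowing along a ray toward the singular $\xi$ is genuinely delicate. One has to exploit both the precise form of Definition \ref{rc} and the fact that Zariski density prevents $\Gamma$ from stabilizing any proper parabolic subgroup, which rigidifies how the approximating orbits organize near $\xi$. Once this limiting argument is settled, Theorem \ref{main} applies directly to conclude that $\Gamma$ is convex cocompact.
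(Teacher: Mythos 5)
Your overall strategy is the same as the paper's: reduce to Theorem \ref{main} by showing that every limit point of a radially cocompact Zariski dense group is conical, and do this by approximating an arbitrary $\xi\in L_\Gamma$ by radial limit points and using the uniform constant in Definition \ref{rc}. However, you stop exactly at the step that constitutes the entire content of the argument: you announce a ``diagonalization'' of the family $\{\gamma_{n,k}\}$ and then declare it to be the main obstacle, delicate because of singular directions and non-uniqueness of rays, without carrying it out. As written, the proposal does not prove the corollary; it reduces it to an unproved claim. In addition, two of your intermediate assertions are unsupported and unnecessary: Definition \ref{rc} does not supply any $\Gamma$-invariant convex set $C$ with cocompact action (that is the \emph{conclusion} one is trying to reach via Theorem \ref{main}, not something encoded in radial cocompactness), and the claim that $L_\Gamma^{\mathrm{rad}}$ is closed is neither obvious nor needed.

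The gap closes much more easily than you anticipate, and this is how the paper does it (Lemma \ref{lemma1}). Take for the approximating points $\xi_n$ the attractive fixed points of regular axial isometries of $\Gamma$, which are dense in $L_\Gamma$ by Link's theorem and are manifestly radial limit points. Fix $t>0$. By radial cocompactness there is, for each $n$, an element $\gamma_{n,t}\in\Gamma$ with $d(\gamma_{n,t}o,\phi_{o,\xi_n}(t))<c$, with $c$ independent of $n$ and $t$. Since $\xi_n\to\xi$ in the cone topology, $\phi_{o,\xi_n}(t)\to\phi_{o,\xi}(t)$ for this fixed $t$, so for $n=n(t)$ large enough the triangle inequality gives $d(\gamma_{n(t),t}o,\phi_{o,\xi}(t))<2c$. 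Running over all $t>0$ shows $\xi$ is conical with uniform constant $2c$. No analysis of singular walls or of how orbits ``organize near $\xi$'' is required: conicality is checked pointwise along the single ray $\phi_{o,\xi}$, the approximation is performed separately for each $t$, and the uniformity of $c$ in Definition \ref{rc} is precisely what makes the double limit harmless. With this step in place, Theorem \ref{main} yields the corollary as you intended.
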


 Due to the rigidity result of Kleiner--Leeb \cite{KL} and Quint \cite{Qui}, Corollary \ref{cor} implies that the class of radially cocompact groups reduces to products of uniform lattices and rank one convex cocompact groups.

\section{Preliminaries}

Throughout this paper, let $X$ denote a symmetric space of noncompact type, $G$ the identity component of the isometry group of $X$, and $\Gamma$ a Zariski dense discrete subgroup of $G$.

\subsection{The geometric boundary}

The \emph{geometric boundary} (or ideal boundary) of $X$, denoted by $\partial_\infty X$, is defined as the set of equivalence classes of geodesic rays under the equivalence relation that two geodesic rays are equivalent if they are within a finite Hausdorff distance from each other. For any point $o\in X$ and any point at infinity $\xi \in \partial_\infty X$, there exists a unique unit speed geodesic ray $\phi_{o,\xi}$ emanating from $o$ and representing $\xi$. The pointed Hausdorff topology on geodesic rays emanating from $o\in X$ induces a topology on $\partial_\infty X$. This topology does not depend on the base point $o$ and is called the \emph{cone topology} on $\partial_\infty X$. 

Let $Y$ be a subset of $X$. Then the ideal boundary $\partial_\infty Y$ of $Y$ is defined as $$\partial_\infty Y= \overline Y \cap \partial_\infty X$$ where $\overline Y$ is the closure of $Y$ with respect to the cone topology on $X\cup \partial_\infty X$.

\subsection{Cartan decomposition}

Choose a base point $o \in X$. Let $K$ be the isotropy subgroup of $o$ in $G$. Denote by $\mathfrak g$ and $\mathfrak k$ the Lie algebras of $G$ and $K$, respectively. 
Then a Cartan involution induced from the geodesic symmetry at $o$ gives rise to a Cartan decomposition $\mathfrak g=\mathfrak k \oplus \mathfrak p$ into its $+1$ and $-1$ eigenspaces.
Let $\mathfrak a$ be a maximal abelian subspace of $\mathfrak p$ and $A=e^{\mathfrak a}$.
An image of $A\cdot o$ under the action of $G$ is called a \emph{maximal flat} in $X$.
Any maximal flat in $X$ is isometric to the Euclidean space whose dimension equals the rank of $X$.
A nonzero element $v \in \mathfrak p$ is said to be \emph{regular} if the centralizer of $v$ in $\mathfrak p$ is a maximal abelian subspace of $\mathfrak p$. Otherwise $v$ is said to be \emph{singular}.
It is well known that the set of regular elements of $\mathfrak a$ is the complement in $\mathfrak a$ of the union of a finite collection of hyperplanes in $\mathfrak a$.
A connected component of the set of regular elements of $\mathfrak a$ is called a \emph{Weyl chamber} in $\mathfrak a$.
Choose a Weyl chamber $\wc$ in $\mathfrak a$. Then it determines a Cartan decomposition $G=Ke^{\overline{\wc}}K$ where $\overline{\wc}$ is the closure of $\wc$ in $\mathfrak a$.
An image of $e^{\overline{\wc}}\cdot o$ under the action of $G$ is called a \emph{Weyl chamber} in $X$.

To each ordered pair of two points $(x,y) \in X\times X$, there is a unique vector $H(x,y)\in \overline{\wc}$ such that $x=go$ and $y=ge^{H(x,y)}o$ for some $g\in G$.
The vector $H(x,y)$ is called the \emph{Cartan vector} of the ordered pair $(x,y)$.
Let $\wc_1=\{ H \in \wc \ | \ \|H\|=1 \}$. For $k \in K$ and $H_1 \in \overline{\wc_1}$, denote by $[k,H_1]$ the unique class in $\partial_\infty X$ which contains the geodesic ray $\phi(t) = ke^{H_1t}o, \ t \geq 0$. Any point $\xi \in \partial_\infty X$ can be written in the form $[k, H_1]$; here, $k$ is called an \emph{angular projection}, and $H_1$ the \emph{Cartan projection} of $\xi$.
Note that the Cartan projection of $\xi$ is unique but its angular projection is unique up to right multiplication by an element in the centralizer of $H_1$ in $K$.
A point $\xi \in \partial_\infty X$ is said to be \emph{regular} if the Cartan projection of $\xi$ is regular. Otherwise $\xi$ is said to be \emph{singular}.
Let $\partial_\infty^\mathrm{reg}X$ be the set of regular points in $\partial_\infty X$. Then we have the natural projection $$\pi : \partial_\infty^\mathrm{reg}X \rightarrow K/M$$ defined by $\pi([k,H_1])=kM$, where 
$M$ is the centralizer of $\mathfrak a$ in $K$. The homogeneous space $K/M$ is called the \emph{}Furstenburg boundary of $X$, and is denoted by $\partial_FX$.

\subsection{Axial isometries}

For each element $g \in G$, its displacement function $d_g : X\rightarrow \mathbb R$ is defined by $d_g(x)=d(x,gx)$. If $d_g$ has a positive minimum value in $X$, then $g$ is said to be \emph{axial} and
the \emph{translation length} $l(g)$ of $g$ is defined by $$l(g)=\inf_{x \in X}d(x,gx).$$
From now on, $g$ is assumed to be axial. By Proposition I.2.3 in \cite{Par}, the limits of $g^no$ and $g^{-n}o$ exist independently of $o$. Hence the limit $g^+=\lim_{n\rightarrow \infty}g^no$ is called the \emph{attractive fixed point} of $g$ and the limit $g^-=\lim_{n\rightarrow \infty}g^{-n}o$ is called the \emph{repulsive fixed point} of $g$. 
Note that $g^+$ and $g^-$ are antipodal.
The set $$\mathrm{Ax}(g)=\{ x \in X \ | \ d(x, gx)=l(g) \}$$ is called the \emph{axis} of $g$. Then $Ax(g)$ is closed and consists of the union of all geodesics translated by $g$ (see \cite[Proposition 1.9.2]{Eb}). For a point $x \in \mathrm{Ax}(g)$, the Cartan vector $H(x, gx)$ is called the \emph{translation vector} of $g$ and denoted by $L(g)$. Note that the translation vector of $g$ is independent of the choice of $x \in \mathrm{Ax}(g)$.

\subsection{The Tits boundary}
The geometric boundary $\partial_\infty X$ of $X$ carries the angle metric $\angle_T$ defined as $$\angle_T(\xi, \eta)=\sup_{x\in X} \angle_x(\xi,\eta)$$ where $\angle_x(\xi,\eta)$ is the angle between the geodesic rays $\phi_{x,\xi}$ and $\phi_{x, \eta}$. The \emph{Tits boundary} of $X$, denoted by $\partial_TX$, is the metric space $(\partial_\infty X,\angle_T)$.
Two points $\xi, \eta \in \partial_\infty X$ are called \emph{antipodal} if $\angle_T(\xi,\eta)=\pi$.
Note that $\xi$ and $\eta$ are antipodal if and only if there exists a geodesic $\phi$ such that $\phi(\infty)=\xi$ and $\phi(-\infty)=\eta$.

Let $\mathcal N(M)$ be the normalizer of $\mathfrak a$ in $K$. Then $\mathcal W=\mathcal N(M)/M$ is called the \emph{Weyl group} of the pair $(\mathfrak g, \mathfrak a)$.
It is well known that $(\partial_\infty A, \mathcal W)$ is a spherical Coxeter complex which makes $\partial_\infty X$ into a spherical building, the so-called \emph{Tits building} associated with $X$ where $A=e^{\mathfrak a}$.
The ideal boundaries of maximal flats in $X$ exactly consist of the apartments with respect to the Tits building on $\partial_\infty X$.

\subsection{Limit set}

Recall that the limit set $L_\Gamma$ of $\Gamma$ is defined as $$L_\Gamma= \overline{\Gamma \cdot o} \cap \partial_\infty X.$$
It is obvious that $L_\Gamma$ is a $\Gamma$-invariant closed subset of $\partial_\infty X$.
The \emph{directional limit set} $P_\Gamma \subset \overline{\wc_1}$ is defined as the set of Cartan projections of limit points of $\Gamma$ and the \emph{limit cone} $\ell_\Gamma \subset \wc_1$ is defined as
$$\ell_\Gamma=\left\{ L(\gamma) / \| L(\gamma)\| \ | \ \gamma \in \Gamma \text{ regular axial} \right\}. $$ 
When $\Gamma$ is nonelementary, it turns out that $P_\Gamma =\overline{\ell_\Gamma}$ (see \cite{Be, Li06}).

Link \cite{Li06} showed that $K_\Gamma=\pi(L^\mathrm{reg}_\Gamma)$ is a minimal closed subset of $\partial_FX$ under the action of $\Gamma$ where $L^\mathrm{reg}_\Gamma$ is the set of all regular limit points of $\Gamma$.

\section{Proof of the main theorem}

We begin by proving the following lemma, which is a key observation for proving the main theorem.

\begin{lemma}\label{antipodal}
Let $X$ be a symmetric space of noncompact type and let $\Gamma$ be a Zariski dense discrete subgroup of $G$.
Suppose that every limit point of $\Gamma$ is conical. Then for any pair of antipodal regular points $\xi$ and $\hat{\xi}$ in $L_\Gamma$, $$\partial_\infty F(\xi,\hat{\xi}) \subset L_{\Gamma},$$
where $F(\xi,\hat{\xi})$ is the unique maximal flat in $X$ whose ideal boundary contains $\xi$ and $\hat{\xi}$.
\end{lemma}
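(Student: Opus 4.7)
Fix $\eta \in \partial_\infty F$; the goal is to exhibit a sequence $\delta_n \in \Gamma$ with $\delta_n o \to \eta$. By the conicality of $\xi$, I would choose $\gamma_n \in \Gamma$ so that $\gamma_n o \to \xi$ with $d(\gamma_n o, \phi(s_n)) \leq R$ for some $s_n \to \infty$ and a uniform constant $R$. Using the conicality of $\hat\xi$ together with a compactness argument that exploits the uniqueness of the maximal flat $F(\xi,\hat\xi)$ associated to an antipodal \emph{regular} pair, one can extract a subsequence along which $\gamma_n^{-1} o \to \hat\xi$. Together with $\gamma_n o \to \xi$, this shows that $\gamma_n$ is asymptotically an axial isometry whose approximate axis tends to $\phi$ and whose translation vector has direction converging to the Cartan projection $H_\xi$ and norm tending to infinity.

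Passing from approximate axial isometries to genuine ones, I would invoke the density of axial elements in a Zariski dense group (cf. Benoist) to replace $\gamma_n$ by axial $h_n \in \Gamma$ with $(h_n^+, h_n^-) \to (\xi, \hat\xi)$, axis $\mathrm{Ax}(h_n) \to \phi$, and translation vector of direction approaching $H_\xi$. Each such $h_n$ preserves the parallel flat $F_n := F(h_n^+, h_n^-) \to F$ and acts on it by a Euclidean translation in direction $H_\xi$; in particular, $h_n$ fixes $\partial_\infty F_n$ pointwise, and under $F_n \to F$ every $\eta \in \partial_\infty F$ is a limit of $h_n$-fixed points $\eta_n \in \partial_\infty F_n$.

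To conclude that $\eta \in L_\Gamma$, the plan is to produce orbit points of the form $h_n^{k_n} y_n$ with $y_n \in \Gamma \cdot o$ and $k_n \in \mathbb{Z}$ converging to $\eta$: since $h_n$ translates $F_n$ by $k_n L(h_n)$ along direction $H_\xi$, a suitably positioned $y_n$ close to $F_n$ in a direction transverse to $H_\xi$ will be translated by $h_n^{k_n}$ onto a ray in $F_n$ heading to $\eta_n$; letting $n \to \infty$ and choosing $k_n$ carefully then yields the desired orbit limit $\eta$.

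The main obstacle I anticipate is supplying the transverse $y_n$: the conicality of $\xi$ and $\hat\xi$ alone only furnishes $\Gamma$-orbit points close to $\phi$, not to all of $F$, so the natural candidates for $y_n$ lying close to $F_n$ in a direction other than $\pm H_\xi$ cannot be produced from these two sequences directly, at the risk of circularity. The resolution must lean on the full strength of Zariski density: Benoist's theorems provide regular axial elements in $\Gamma$ whose translation directions fill an open subcone of $\overline{\wc}$, giving axial elements whose fixed points have varied Cartan projections. Combining these with the hypothesis that \emph{every} limit point is conical (applied iteratively to the new fixed points thereby produced) should yield the transverse orbit points needed to close the argument. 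In short, the heart of the lemma lies in bootstrapping the one-dimensional information along $\phi$ (from conicality of $\xi,\hat\xi$) into full $r$-dimensional information along $F$, using Zariski density and the blanket conicality hypothesis as the crucial inputs.
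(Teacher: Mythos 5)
Your proposal reproduces the outer skeleton of the paper's argument --- pass to regular axial isometries $h_n$ with $(h_n^+,h_n^-)\to(\xi,\hat\xi)$ and $F(h_n^+,h_n^-)\to F(\xi,\hat\xi)$, then conclude by closedness of $L_\Gamma$ --- but the heart of the lemma, namely that $\partial_\infty F(h^+,h^-)\subset L_\Gamma$ for a single regular axial $h\in\Gamma$, is exactly the step you leave open. You correctly diagnose the obstacle (conicality of $\xi,\hat\xi$ only furnishes orbit points near the one geodesic $\phi$, whereas one needs orbit points near $F$ in transverse directions), but the assertion that iterating Zariski density and the blanket conicality hypothesis ``should yield'' them is not an argument. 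The paper's mechanism is concrete and different from your proposed bootstrap: by Benoist, one first chooses $\gamma_0$ regular axial whose translation direction lies in the \emph{interior} of the directional limit set $P_\Gamma$, so that $\gamma_0^+$ is an interior point of $L_\Gamma\cap\partial_\infty W$. For any Euclidean plane $\mathbb{E}^2\subset F(\gamma_0^+,\gamma_0^-)$ containing the axis direction, one picks limit points on the circle $\partial_\infty\mathbb{E}^2$ on \emph{both} sides of $\gamma_0^+$; their conicality produces orbit points whose nearest-point projections to $\mathbb{E}^2$ have transverse coordinate tending to $\pm\infty$, and the $\langle\gamma_0\rangle$-orbit of each such point is $l(\gamma_0)$-dense in a line parallel to the axis. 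These lines sweep the plane at all heights, so every direction of $\partial_\infty\mathbb{E}^2$ is a limit of orbit points; the union over all such planes gives $\partial_\infty F(\gamma_0^+,\gamma_0^-)\subset L_\Gamma$, which in turn forces $P_\Gamma=\overline{\mathfrak a_1^+}$ and lets the same argument run for every regular axial $\gamma$. Nothing in your sketch supplies this; without it the ``transverse $y_n$'' never materialize.

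A second, smaller gap: your claim that conicality of $\hat\xi$ plus ``a compactness argument'' lets you extract a subsequence with $\gamma_n^{-1}o\to\hat\xi$ is unjustified --- conicality controls the forward orbit $\gamma_no$ along $\phi_{o,\xi}$ but says nothing about the angular position of $\gamma_n^{-1}o$. The paper establishes the required $\Gamma$-duality of $\xi$ and $\hat\xi$ through Link's minimality theorem for the Furstenberg limit set $K_\Gamma$ (the set of points dual to $\xi$ is closed and $\Gamma$-invariant, hence projects onto all of $K_\Gamma$, and any dual point is antipodal), and only then invokes Link's Proposition 4.9 to produce the axial approximants $h_n$. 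You should not expect to get this duality from conicality alone.
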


\begin{proof}
We first prove the lemma for pairs consisting of the attractive fixed point and repulsive fixed point of a regular axial isometry in $\Gamma$. Then, by using an approximation argument, we will prove it for an arbitrary pair of antipodal limit points.

Let $P_\Gamma$ denote the directional limit set of $\Gamma$ and $\ell_\Gamma$ the limit cone of $\Gamma$. It is known that the interior of $P_\Gamma$ is nonempty and $P_\Gamma=\overline{\ell_\Gamma}$ when $\Gamma$ is Zariski dense.
Moreover the limit set of $\Gamma$ in any Weyl chamber at infinity, if nonempty, is naturally identified with the set of directions in $P_\Gamma$. For more details, see \cite{Be, Li06}.
Hence there is a regular axial isometry $\gamma_0\in \Gamma$ such that the translation direction of $\gamma_0$ is an interior point of $P_\Gamma$.
Since $\gamma_0$ is a regular axial isometry, the axis $\mathrm{Ax}(\gamma_0)$ of $\gamma_0$ is the maximal flat $F(\gamma^+_0,\gamma^-_0)$ whose ideal boundary contains $\gamma^+_0$ and $\gamma_0^-$.
Moreover, $\gamma_0$ acts on $F(\gamma^+_0,\gamma^-_0)$ as a translation in the direction of $\gamma^+_0$. Let $W$ be a Weyl chamber of $F(\gamma^+_0,\gamma^-_0)$ with $\gamma^+_0\in \partial_\infty W$. According to Benoist's theorem in \cite{Be}, $\partial_\infty W \cap L_\Gamma$ is naturally identified with $P_\Gamma$. Thus the interior of $\partial_\infty W \cap L_\Gamma$ is nonempty and moreover $\gamma^+_0$ is an interior point of $\partial_\infty W \cap L_\Gamma$.

Let $S^1$ be an arbitrary isometrically embedded one-dimensional sphere in $\partial_\infty F(\gamma^+_0,\gamma^-_0)$ passing through $\gamma^+_0$ and $\gamma^-_0$. Then there is a Euclidean plane $\mathbb E^2 \subset F(\gamma^+_0,\gamma^-_0)$ with $\partial_\infty \mathbb E^2=S^1$. We choose a Cartesian coordinate system on the Euclidean plane $\mathbb E^2$, $(x_1,x_2)$ for which $o=(0,0)$ and the direction of $\gamma^+_0$ is $(1,0)$.
Since $\gamma^+_0$ is an interior point of $L_\Gamma \cap S^1$, there is an open circular arc $I$ in $S^1$ with $\gamma^+_0\in I \subset L_\Gamma \cap S^1$. Hence it is possible to choose two limit points $\xi, \eta \in I$ sufficiently near to $\gamma^+_0$ so that $\gamma^+_0 \in [\xi, \eta] \subset I$ where $[\xi,\eta]$ denotes the circular arc on $S^1$ connecting $\xi$ and $\eta$. We may assume that $\xi$ is in the direction of the positive $x_2$-coordinate and $\eta$ is in the direction of the negative $x_2$-coordinate.
Since every limit point of $\Gamma$ is a conical limit point, there are sequences $(\alpha_n)$ and $(\beta_n)$ in $\Gamma$ and a constant $D>0$ such that 
\begin{equation} 
d(\alpha_n o, \phi_{o,\xi})<D \text{ and }  d(\beta_n o, \phi_{o,\eta})<D.
\end{equation}
Consider the nearest point projection $\overline{\alpha_n o}$ of $\alpha_n o$ to $\mathbb E^2$.
Then it is obvious that $d(\overline{\alpha_n o}, \phi_{o,\xi})<D$. Since $\gamma_0$ acts on $\mathbb E^2$ as a translation in the direction of $(1,0)$, the orbit of $\overline{\alpha_n o}$ under the action of the cyclic group $\langle \gamma_0\rangle$ lies on the straight line $\ell_n \subset \mathbb E^2$ parallel to the $x_1$-axis and passing through $\overline{\alpha_n o}$.
Furthermore, the orbit of $\overline{\alpha_n o}$ under the action of $\langle \gamma_0\rangle$ is $l(\gamma_0)$-dense in $\ell_n$. 
Since $\xi$ is in the direction of the positive $x_2$-coordinate, we can assume that every $\overline{\alpha_n o}$ has a positive $x_2$-coordinate and the $x_2$-coordinate of  $\overline{\alpha_n o}$ tends to infinity. Thus $\{\ell_n\}_{n\in\mathbb N}$ is a family of straight lines parallel to the $x_1$-axis and going off to infinity from the $x_1$-axis.
Then it can be easily seen that for any half-line $\ell_+$ in the direction of the positive $x_2$-axis and any $n \in \mathbb N$,
$$d(\ell_+, \langle \gamma_0\rangle \cdot \overline{\alpha_n o})<l(\gamma_0).$$
This implies that every point on $S^1$ with the direction of the positive $x_2$-axis is contained in $L_\Gamma$.

Noting that $\eta$ is in the direction of the negative $x_2$-coordinate, a similar argument to the above leads to the conclusion that every point on $S^1$ with the direction of the negative $x_2$-coordinate is also contained in $L_\Gamma$. Therefore $S^1 \subset L_\Gamma$ for any one-dimensional sphere $S^1 \subset \partial_\infty F(\gamma_0^+,\gamma_0^-)$ passing through $\gamma^+_0$ and $\gamma^-_0$. Since the union of all one-dimensional spheres passing through $\gamma^+_0$ and $\gamma^-_0$ is $\partial_\infty F(\gamma^+_0,\gamma^-_0)$, we conclude that 
$\partial_\infty F(\gamma^+_0,\gamma^-_0) \subset L_\Gamma$.

Before dealing with arbitrary regular axial isometries of $\Gamma$, note that $\partial_\infty F(\gamma^+,\gamma^-) \subset L_\Gamma$ implies $P_\Gamma=\overline{\mathfrak{a}_1^+}$.
Now let $\gamma$ be an arbitrary regular axial isometry of $\Gamma$. Let $W$ be a Weyl chamber with $\gamma^+ \in \partial_\infty W$. Then since $P_\Gamma=\overline{\mathfrak{a}^+_1}$, it follows that $\partial_\infty W \cap L_\Gamma=\partial_\infty W$. Hence $\gamma^+$ is an interior point of $\partial_\infty W \subset L_\Gamma$. Repeating the previous argument leads to $\partial_\infty F(\gamma^+,\gamma^-) \subset L_\Gamma$.

Eberlein \cite{Eb} introduced the notion of $\Gamma$-duality as follows: Two points $\xi$ and $\eta$ in $\partial_\infty X$  are \emph{$\Gamma$-dual} if there exists a sequence $\{\gamma_n\} \subset \Gamma$ such that 
$$\gamma_n o \rightarrow \xi \text{ and }\gamma_n^{-1}o \rightarrow \eta \text{ as } n\rightarrow \infty.$$
For more information, see \cite[Section 1.9]{Eb}. 
Let $T^1X$ be the set of all unit vectors in $X$. Eberlein defined the nonwandering set $\Omega(\Gamma) \subset T^1X$ of $\Gamma$ which is an analogue of the limit set of $\Gamma$ (see \cite[Definition 1.9.10]{Eb}).
 By Proposition 1.9.14 in \cite{Eb}, the nonwandering set $\Omega(\Gamma)$ of $\Gamma$ can be defined by
the set of all unit vectors $v \in T^1X$ such that $\phi_v(\infty)$ and $\phi_v(-\infty)$ lie in $L_\Gamma$ and are $\Gamma$-dual where $\phi_v : \mathbb R \rightarrow X$ is the geodesic whose initial vector is $v$.
Let $\xi$ and $\hat{\xi}$ be antipodal limit points in $L_\Gamma$.

\begin{lemma}
Let $v$ be a unit vector in $T^1X$ with $\xi=\phi_v(\infty)$ and $\hat \xi=\phi_v(-\infty)$. Then $v$ lies in the nonwandering set $\Omega(\Gamma)$ of $\Gamma$.
\end{lemma}
\begin{proof}
By \cite[Proposition 1.9.14]{Eb}, it is sufficient to show that $\xi$ is $\Gamma$-dual to $\hat \xi$.
Let $D(\xi)$ be the set of limit points of $L_\Gamma$ that are $\Gamma$-dual to $\xi$. 
Then $D(\xi)$ is a $\Gamma$-invariant closed subset of $L_\Gamma$ by Proposition 1.9.13 in \cite{Eb} and thus
$\pi(D(\xi))$ is also a $\Gamma$-invariant closed subset of the limit set $K_\Gamma$ considered as a subset of the Furstenberg boundary of $X$. Since $K_\Gamma$ is a minimal closed set under the action of $\Gamma$ by \cite[Theorem 1.1]{Li06}, $$\pi(D(\xi))=K_\Gamma.$$
This implies that the limit set of $\Gamma$ in any Weyl chamber at infinity, if nonempty, has a point that is $\Gamma$-dual to $\xi$. 

Let $\eta \in L_\Gamma$ be a limit point that is $\Gamma$-dual to $\xi$.
Then, by definition, there exists a sequence $\{\gamma_n\} \subset \Gamma$ such that $\gamma_n o \rightarrow \xi$ and $\gamma_n^{-1}o \rightarrow \eta$ as $n\rightarrow \infty$.
Then the direction of $H(o,\gamma_no)$ tends to the direction of $\xi$ and the direction of $H(o,\gamma_n^{-1}o)$ tends to the direction of $\eta$. Hence it can be easily seen that $\eta$ is antipodal to $\xi$.
Therefore any limit point $\hat \xi$ of $L_\Gamma$ that is antipodal to $\xi$ must be $\Gamma$-dual to $\xi$. 
\end{proof}

By \cite[Proposition 4.9]{Li06}, there exists a sequence of regular axial isometries $\{h_n\} \subset \Gamma$ such that $h_n^+$ converges to $\xi$ and $h_n^-$ converges to $\hat{\xi}$. Furthermore, $d(x, F(h_n^+, h_n^-))$ remains bounded as $n \rightarrow \infty$. Hence $F(h_n^+, h_n^-)$ converges to the maximal flat $F(\xi,\hat{\xi})$ in the Chabauty topology. As we have seen before, $\partial_\infty F(h_n^+,h_n^-) \subset L_\Gamma$ for all $n \in \mathbb N$. Since $L_\Gamma$ is closed in $\partial_\infty X$, it follows that $\partial_\infty F(\xi,\hat{\xi}) \subset L_\Gamma$, which completes the proof.
\end{proof}

\begin{proof}[Proof of Theorem \ref{main}]
We will prove that $L_\Gamma$ is a top dimensional subbuilding of the Tits building associated with $X$.
It is sufficient to prove that any two points in $L_\Gamma$ are contained in a common apartment in the Tits building.
Let $\xi$ and $\eta$ be any two regular limit points of $L_\Gamma$. 
Since $\Gamma$ is Zariski dense, $\eta$ has an antipode $\hat \eta \in L_\Gamma$. By Lemma \ref{antipodal}, the apartment $A(\eta,\hat \eta)$ containing $\eta$ and $\hat \eta$ is contained in $L_\Gamma$.

According to Lemma 3.10.2 in \cite{KL97}, there is a point $\hat \xi \in A(\eta, \hat \eta)$ such that $$\pi=d(\xi, \hat \xi)=d(\xi,\eta)+d(\eta,\hat \xi).$$
Hence there is an apartment $A_0$ containing $\xi, \hat \xi$ and $\eta$. Since $\xi$ and $\hat \xi$ are antipodal regular limit points of $\Gamma$, it follows from Lemma \ref{antipodal} that $A_0 \subset L_\Gamma$.
Therefore, any two regular points of $L_\Gamma$ lie in an apartment contained in $L_\Gamma$.

Let $\xi'$ and $\eta'$ be arbitrary points in $L_\Gamma$. Since $P_\Gamma=\overline{\wc_1}$, it is possible to choose regular limit points $\xi$ and $\eta$ of $\Gamma$ sufficiently near to $\xi'$ and $\eta'$ respectively so that $\xi$ and $\xi'$ are contained in one Weyl chamber at infinity and $\eta$ and $\eta'$ are contained in one Weyl chamber at infinity. 
Then, as seen before, there is an apartment $A_0' \subset L_\Gamma$ such that $\xi, \eta \in A_0'$. Since $\xi$ and $\xi'$ are contained in one Weyl chamber at infinity, it follows that $\xi, \xi' \in A_0'$. Similarly $\eta, \eta' \in A_0'$ and thus $\xi', \eta' \in A_0' \subset L_\Gamma$. Hence $L_\Gamma$ is a top dimensional subbuilding of the Tits boundary of $X$.

Let $X=X_1\times \cdots \times X_k$ be the de Rham decomposition of $X$.
Applying Theorem 3.1 in \cite{KL} to the top dimensional subbuilding $L_\Gamma$ of $\partial_{T}X$, the limit set $L_\Gamma$ splits as a join $L_\Gamma=B_1 \circ \cdots \circ B_k$ where $B_i=\partial_T X_i$ when $X_i$ has rank at least two, and $|B_i|=\infty$ for each $i$.
Let $C_i$ be the closed convex hull of $B_i$. If $X_i$ has rank at least two, then $C_i=X_i$. Then $C:=\prod_i C_i$ is a $\Gamma$-invariant closed convex subset of $X$ with $\partial_\infty C=L_\Gamma$.
It only remains to prove that $\Gamma$ acts on $C$ cocompactly.

The cocompactness of the $\Gamma$-action on $C$ can be shown by Bowditch's proof of Proposition 6.13 in \cite{Bo}. For the reader's convenience, we here give a proof.
Suppose that $C/\Gamma$ is not compact. Then there is a sequence $\{x_n\}$ in $C$ such that $d(x_n, \Gamma o) \rightarrow \infty$ as $n \rightarrow \infty$.
Since $d(x_n, \Gamma o)=d( \Gamma x_n, o)$, it follows that $d(\Gamma x_n, o) \rightarrow \infty$ as $n \rightarrow \infty$.
Let $y_n$ be the point on $\Gamma x_n$ nearest to $o$, that is, $$d(y_n, o)=d(\Gamma x_n, o)=d(\Gamma y_n, o)=d(y_n,\Gamma o).$$
Note that $y_n \in C$ due to the $\Gamma$-invariance of $C$. By passing to a subsequence, we may assume that $y_n$ converges to some limit point $\xi \in L_\Gamma$.
Since every limit point of $\Gamma$ is conical, there is a sequence $\{\gamma_n\}$ in $\Gamma$ such that $\gamma_n o$ remains within a bounded distance of the geodesic ray $\phi_{o, \xi }$.
Now we have two sequences $\{y_n\}$ and $\{\gamma_n o\}$ in $X$ tending to $\xi$. This means that $\angle_o(y_n,\xi)$ and $\angle_o(\gamma_n o,\xi)$ converge to $0$ as $n \rightarrow \infty$.
Furthermore, since $\gamma_n o$ remains within a bounded distance of the geodesic ray $\phi_{o, \xi }$, we have that for any $n \in \mathbb N$, $\angle_{\gamma_n o}(y_m, \xi)$ converges to $0$ as $m \rightarrow \infty$.
Hence there is a geodesic triangle with vertices $o, \gamma_p o$ and $y_q$ such that $\angle_{\gamma_po}(o, y_q) >\frac{\pi}{2}$. 
By the law of cosines in a symmetric space $X$ of noncompact type, 
$$ d(o, y_q)^2 \geq d(o, \gamma_p o)^2 + d(\gamma_p o, y_q)^2 - 2 d(o, \gamma_p o)d(\gamma_p o, y_q)\cos \angle_{\gamma_po}(o, y_q).$$
Since $\angle_{\gamma_po}(o, y_q) >\frac{\pi}{2}$, the above inequality implies that $$d(o, y_q) > d(\gamma_p o, y_q) \geq d(\Gamma o, y_q) =d(o, y_q)$$
which contradicts the choice of $y_q$. Therefore $\Gamma$ acts on $C$ cocompactly.
\end{proof}

\section{Radially cocompact groups}

To define the notion of a radially cocompact group, we begin by recalling the definition of a radial limit point.

\begin{definition}\label{rc2}
A point $\xi \in \partial_\infty X$ is called a \emph{radial limit point} of $\Gamma$ if there exists a sequence $\{\gamma_n\}$ of elements of $\Gamma$ such that $\gamma_n o$ converges to $\xi$ and remains within a bounded distance of the union of closed Weyl chambers with apex $o$ containing the geodesic ray $\phi_{o,\xi}$ starting from $o$ and pointing towards $\xi$. The set of radial limit points is denoted by $L_\Gamma^{rad}$.
\end{definition}

\begin{definition}\label{rc}
A nonelementary discrete group $\Gamma < G$ is called \emph{radially cocompact} if there exists a constant $c>0$ such that for any $\xi \in 
\Lambda^{rad}_\Gamma$ and for all $t>0$, there exists an element $\gamma \in \Gamma$ with $$d(\gamma o, \phi_{o,\xi}(t))<c.$$
\end{definition}

Note that the notion of a radial limit point is a weaker notion than that of a conical limit point.
Definition \ref{rc} means that every radial limit point of a radially cocompact group is conical.  
Indeed it is not difficult to prove that every limit point of a radially cocompact group is conical, as follows.

\begin{lemma}\label{lemma1}
Let $\Gamma$ be a radially cocompact, Zariski dense discrete subgroup of $G$. Then every limit point of $\Gamma$ is a conical limit point. 
\end{lemma}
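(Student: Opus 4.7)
The plan is to combine two ingredients: the definition of radial cocompactness, which immediately makes every radial limit point conical, and the density of $L_\Gamma^{rad}$ in $L_\Gamma$ in the cone topology, which should be available because $\Gamma$ is Zariski dense. The desired conicality of an arbitrary $\xi \in L_\Gamma$ will then follow from an approximation argument.

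The easy half first. Given $\eta \in L_\Gamma^{rad}$, choose any $t_n \to \infty$ and invoke Definition \ref{rc} to obtain $\beta_n \in \Gamma$ with $d(\beta_n o, \phi_{o,\eta}(t_n)) < c$; this yields $\beta_n o \to \eta$ with $d(\beta_n o, \phi_{o,\eta}) < c$ uniformly, so $\eta$ is conical. To extend this to an arbitrary $\xi \in L_\Gamma$, I fix $t > 0$ and exploit that $\eta \mapsto \phi_{o,\eta}(t)$ is continuous in the cone topology: there is a neighborhood $U_t$ of $\xi$ in $\partial_\infty X$ with $d(\phi_{o,\eta}(t), \phi_{o,\xi}(t)) < 1$ for every $\eta \in U_t$. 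Using density of $L_\Gamma^{rad}$ in $L_\Gamma$, pick $\eta_t \in L_\Gamma^{rad} \cap U_t$; radial cocompactness then supplies $\gamma_t \in \Gamma$ with $d(\gamma_t o, \phi_{o,\eta_t}(t)) < c$, so $d(\gamma_t o, \phi_{o,\xi}(t)) < c + 1$. Letting $t \to \infty$, the sequence $\gamma_t o$ tends to $\xi$ within uniform distance $c + 1$ of $\phi_{o,\xi}$, so $\xi$ is conical.

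The main obstacle is thus the density $L_\Gamma = \overline{L_\Gamma^{rad}}$ in the cone topology. I would argue this in two moves. First, the attractive fixed point $\gamma^+$ of any axial $\gamma \in \Gamma$ is already conical (hence radial): letting $p$ be the projection of $o$ onto $\mathrm{Ax}(\gamma)$, one has $d(\gamma^n o, \gamma^n p) = d(o, p)$ for all $n$, the point $\gamma^n p$ lies on $\phi_{p, \gamma^+} \subset \mathrm{Ax}(\gamma)$, and by the CAT(0) monotonicity of distance between asymptotic rays, $\phi_{p, \gamma^+}$ and $\phi_{o, \gamma^+}$ remain within $d(o, p)$ of each other, giving $d(\gamma^n o, \phi_{o,\gamma^+}) \leq 2 d(o, p)$. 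Second, density of such fixed points in $L_\Gamma$ follows exactly as in the proof of Lemma \ref{antipodal}: Zariski density of $\Gamma$ guarantees every $\xi \in L_\Gamma$ admits an antipode $\hat\xi \in L_\Gamma$, whereupon \cite[Proposition 4.9]{Li06} furnishes regular axial isometries $h_n \in \Gamma$ with $h_n^+ \to \xi$ and $h_n^- \to \hat\xi$, placing $\xi$ in the closure of $\{h^+ : h \in \Gamma \text{ regular axial}\} \subset L_\Gamma^{rad}$.
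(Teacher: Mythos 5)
Your argument is correct and follows essentially the same route as the paper: attractive fixed points of regular axial isometries are conical, they are dense in $L_\Gamma$, and radial cocompactness together with a limiting/approximation argument transfers conicality to an arbitrary limit point. The only differences are cosmetic --- where you re-derive the density of attractive fixed points via antipodes and \cite[Proposition 4.9]{Li06}, the paper simply cites \cite[Theorem 4.10]{Li06}, and where the paper calls the conicality of $\gamma^+$ ``obvious,'' you supply the CAT(0) projection argument.
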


\begin{proof} First of all, it is obvious that the fixed points of regular axial isometries of $\Gamma$ are all conical limit points. 
By \cite[Theorem 4.10]{Li06}, if $\Gamma$ is Zariski dense, the set of attractive fixed points of regular axial isometries of $\Gamma$ is a dense subset of the limit set $L_\Gamma$. Hence, given a limit point $\xi \in L_\Gamma$, there exists a sequence $\{\xi_n\}$ of attractive fixed points of regular axial isometries of $\Gamma$ such that $\xi_n$ converges to $\xi$ in the cone topology on $\partial_\infty X$. Since each $\xi_n$ is a radial limit point and $\Gamma$ is radially cocompact, then for each $n\in \mathbb N$ and each $t>0$ there exists an element $\gamma_{n,t} \in \Gamma$ such that $$d(\gamma_{n,t} \cdot o, \phi_{o,\xi_n}(t))<c,$$ for some constant $c>0$ depending only on $\Gamma$.
Noting that $\phi_{o,\xi_n}$ converges to $\phi_{o,\xi}$ in the Chabauty topology and $\phi_{o,\xi_n}(t)$ converges to $\phi_{o,\xi}(t)$ as $n\rightarrow \infty$, it immediately follows from the triangle inequality that for each $t>0$ there exists a number $n(t)>0$ such that $$d(\gamma_{n(t),t} \cdot o, \phi_{o,\xi}(t))<2c,$$
which implies that $\xi$ is a conical limit point.
\end{proof}

Combining Lemma \ref{lemma1} with Theorem \ref{main}, Corollary \ref{cor} immediately follows.

\section*{Acknowledgements} The author is indebted to an anonymous reviewer of an earlier paper for providing insightful comments and providing directions for additional research which has resulted in this paper. 
This work was supported by a research grant of Jeju National University in 2017.

\end{document}